\documentclass[dvips,12pt,a4paper]{amsart}

\usepackage[matrix,arrow,curve]{xy}

\usepackage{amsmath,amssymb}

\usepackage[dvips]{graphicx}

\def\M{{\mathcal{M}}}

\def\Z{{\mathbb Z}}
\def\C{{\mathbb C}}
\def\bL{{\mathbb L}}
\def\mcO{{\mathcal O}}

\def\P{{\mathbb P}}
\def\tpi{{\widetilde\pi}}
\def\tB{{\widetilde B}}

\newtheorem{proposition}{Proposition}[section]
\newtheorem{theorem}[proposition]{Theorem}
\newtheorem{corollary}[proposition]{Corollary}
\newtheorem{lemma}[proposition]{Lemma}

\title[$\M_{r,n}$ and Macmahon's formula]{The moduli space of sheaves and the generalization of MacMahon's formula}

\thanks{The author is partially supported by the grants RFBR-10-01-00678, NSh-8462.2010.1, the Vidi grant of NWO and by the Moebius Contest Foundation for Young Scientists}

\author{A. Buryak}

\address{Faculty of Mechanics and Mathematics, Moscow State University, 119991 Moscow, Russia and\newline\indent
Department of Mathematics, University of Amsterdam, P.~O.~Box 94248, 1090 GE Amsterdam, The Netherlands}
\email{buryaksh@mail.ru, a.y.buryak@uva.nl}
\begin{document}

\begin{abstract}
Recently M. Vuletic found a two-parameter generalization of the MacMahon's formula. In this note we show that certain ingredients of her formula have a clear interpretation in terms of the geometry of the moduli space of sheaves on the projective plane. 
\end{abstract}

\maketitle

\section{Introduction}

A plane partition is a Young diagram filled with positive integers that form nonincreasing rows and columns. For a plane partition $\pi$ one defines the weight $|\pi|$ to be the sum of all entries. Denote by $\mathcal P$ the set of all plane partitions. 

A generating function for the number of plane partitions is given by the famous MacMahon's formula (see e.g. \cite{Stanley}):
\begin{gather*}
\sum_{\pi\in\mathcal P} s^{|\pi|}=\prod_{n=1}^{\infty}\frac{1}{(1-s^n)^n}.
\end{gather*}

There are several generalizations of MacMahon's formula, see e.g. \cite{Ciucu,Vuletic}. In this paper we investigate the generalization of M. Vuletic from \cite{Vuletic}. For each plane partition $\pi$ she defined a rational function $F_{\pi}(q,t)$ and proved that
\begin{gather}\label{Vuletic formula}
\sum_{\pi\in\mathcal P}F_{\pi}(q,t)s^{|\pi|}=\prod_{n=1}^{\infty}\prod_{k=0}^{\infty}\left(\frac{1-ts^nq^k}{1-s^nq^k}\right)^n.
\end{gather}
Her proof was inspired by \cite{Okounkov} and \cite{Vuletic2}.

Let $\M_{r,n}$ be the framed moduli space of torsion free sheaves on $\P^2$ with rank $r$ and $c_2=n$. This is a smooth irreducible quasi-projective variety of dimension $2rn$. In the case $r=1$ it is isomorphic to the Hilbert scheme of $n$ points on the plane. The moduli space $\M_{r,n}$ has a simple quiver description and we recall it in Section \ref{subsection:quiver description}. There is a natural action of the two-dimensional torus $T=(\C^*)^2$ on $\M_{r,n}$. We refer the reader to the book \cite{Nakajima} for a more detailed discussion of the moduli space $\M_{r,n}$.   

We denote by $K_0(\nu_{\C})$ the Grothendieck ring of complex quasi-projective varieties. 

In this note we show that the coefficients $F_{\pi}(q,0)$ give the formulas for the the classes in $K_0(\nu_{\C})$ of the irreducible components of the fixed point set $\M_{r,n}^T$. 

We also show how to use the $T$-action on $\M_{r,n}$ to get a combinatorial identity, which is close to \eqref{Vuletic formula}.

We refer the reader to \cite{Buryak} for results about the Hilbert scheme of $n$ points on the plane close to this work.

\subsection{Definition of $F_{\pi}(q,t)$} 

For nonnegative integers $n$ and $m$ let 
\begin{gather*}
f(n,m)=\begin{cases}
					\prod\limits_{i=0}^{n-1}\frac{1-q^it^{m+1}}{1-q^{i+1}t^m},&n\ge 1,\\
					1,&n=0.
			 \end{cases}
\end{gather*}
Let $\pi\in\mathcal P$ be a plane partition and let $(i,j)$ be a box in its support (where the entries are nonzero). Let $\lambda,\mu$ and $\nu$ be the ordinary partitions defined by
\begin{align*}
&\lambda=(\pi_{i,j},\pi_{i+1,j+1},\ldots),\\
&\mu=(\pi_{i+1,j},\pi_{i+2,j+1},\ldots),\\
&\nu=(\pi_{i,j+1},\pi_{i+1,j+2},\ldots).
\end{align*}
For a box $(i,j)$ of $\pi$ let
\begin{gather*}
F_{\pi}(i,j)(q,t)=
\prod_{m=0}^{\infty}\frac{f(\lambda_1-\mu_{m+1},m)f(\lambda_1-\nu_{m+1},m)}{f(\lambda_1-\lambda_{m+1},m)f(\lambda_1-\lambda_{m+2},m)}.
\end{gather*}
An example is on Figure \ref{pic2}.

\begin{figure}[h]
\begin{center}
\includegraphics{ex2.1}
\end{center}
\caption{}
\label{pic2}
\end{figure}

For a plane partition $\pi$ the rational function $F_{\pi}(q,t)$ is defined by
\begin{gather*}
F_{\pi}(q,t)=\prod_{(i,j)\in\pi}F_{\pi}(i,j)(q,t).
\end{gather*} 

\subsection{Grothendieck ring of quasi-projective varieties}
Here we recall a definition of the Grothendieck ring $K_0(\nu_{\C})$ of complex quasi-projective varieties. It is an abelian group generated by the classes $[X]$ of all complex quasi-projective varieties $X$ modulo the relations:
\begin{enumerate}
\item if varieties $X$ and $Y$ are isomorphic, then $[X]=[Y]$;
\item if $Y$ is a Zariski closed subvariety of $X$, then $[X]=[Y]+[X\backslash Y]$.
\end{enumerate}  
The multiplication in $K_0(\nu_{\C})$ is defined by the Cartesian product of varieties: $[X_1]\cdot[X_2]=[X_1\times X_2]$. The class $\left[\mathbb A^1_{\C}\right]\in K_0(\nu_{\C})$ of the complex affine line is denoted by $\bL$.

\subsection{Moduli space of sheaves on $\P^2$} 

In this section we show a geometric meaning of the functions $F_{\pi}(q,0)$.
 
The moduli space $\M_{r,n}$ is defined by
\begin{gather*}
\M_{r,n}=\left.\left\{(E,\Phi)\left|
\begin{smallmatrix}
\text{$E$: a torsion free sheaf on $\P^2$}\\
rank(E)=r, c_2(E)=n\\
\text{$\Phi\colon E|_{l_{\infty}}\xrightarrow{\sim}\mcO^{\oplus r}_{l_{\infty}}$: a framing at infinity}
\end{smallmatrix}\right.\right\}\right/{\text{isomorphism}},
\end{gather*}
where $l_{\infty}=\{[0:z_1:z_2]\in\P^2\}\subset\P^2$ is the line at infinity. 

The torus $T=(\C^*)^2$ acts on $\C^2$ by scaling the coordinates, $(t_1,t_2)(x,y)=(t_1x,t_2y)$. This action lifts to the $T$-action on the moduli space $\M_{r,n}$. We will prove that the irreducible components of the variety $\M_{r,n}^T$ are enumerated by plain partitions $\pi$ such that $|\pi|=n$ and $\pi_{0,0}\le r$. We denote by $\M_{r,n}^T(\pi)$ the corresponding irreducible components. We use the notation $[N]!_{q}=\prod_{i=1}^N(1-q^i)$. We will prove the following statement.

\begin{theorem}\label{theorem:irreducible component}
Let $\pi$ be a plane partition such that $|\pi|=n$ and $\pi_{0,0}\le r$, then
\begin{gather*}
\left[\M_{r,n}^T(\pi)\right]=\frac{[r]!_{\bL}}{[r-\pi_{0,0}]!_{\bL}}\prod_{i,j\ge 0}\frac{[\pi_{i,j}-\pi_{i+1,j+1}]!_{\bL}}{[\pi_{i,j}-\pi_{i+1,j}]!_{\bL}[\pi_{i,j}-\pi_{i,j+1}]!_{\bL}}.
\end{gather*}
\end{theorem}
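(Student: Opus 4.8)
The plan is to make the statement fully explicit through the quiver (ADHM) description of $\M_{r,n}$ and then to compute the class by an inductive Grassmannian-bundle argument. Recall $\M_{r,n}=\{(B_1,B_2,i,j):[B_1,B_2]+ij=0,\ \text{stable}\}/GL(V)$ with $\dim V=n$, $\dim W=r$, $i\colon W\to V$, $j\colon V\to W$, the torus acting by $(t_1,t_2)\cdot(B_1,B_2,i,j)=(t_1B_1,t_2B_2,i,t_1t_2j)$. For a $T$-fixed point one extends the action to $V$, decomposes $V=\bigoplus_{k,l\ge0}V_{k,l}$ into weight spaces, and reads off $j=0$, $[B_1,B_2]=0$, $i(W)\subseteq V_{0,0}$ and $V_{k,l}=B_1^kB_2^l\,i(W)$; then $\pi_{k,l}:=\dim V_{k,l}$ is a plane partition with $|\pi|=n$ and $\pi_{0,0}\le r$. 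Equivalently, a $T$-fixed point is a cyclic bigraded $\C[x,y]$-module $V=(\C[x,y]\otimes W)/N$ generated in degree $(0,0)$; and since multiplication by $x$ and by $y$ is the identity map between consecutive bidegrees, the bigraded submodule $N$ amounts to a family of subspaces $N_{k,l}\subseteq W=\C^r$ with $N_{k,l}\subseteq N_{k+1,l}$, $N_{k,l}\subseteq N_{k,l+1}$, the Hilbert-function condition being $\dim N_{k,l}=r-\pi_{k,l}$. As the shape $\pi$ is locally constant, $\M_{r,n}^T(\pi)$ is identified with the smooth projective variety of order-preserving maps $(k,l)\mapsto N_{k,l}$ from $\Z_{\ge0}^2$ to the subspace lattice of $\C^r$ with $\dim N_{k,l}=r-\pi_{k,l}$; this also identifies the irreducible components.

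I would then compute the class by building the family $\{N_{k,l}\}$ up box by box, processing a box $(k,l)$ only after its strict successors $(k+1,l)$ and $(k,l+1)$ in $\mathrm{supp}(\pi)$ (a missing successor being read as $\C^r$); at such a step one chooses $N_{k,l}$ as an $(r-\pi_{k,l})$-dimensional subspace of the already-determined subspace $N_{k+1,l}\cap N_{k,l+1}$. If $\dim(N_{k+1,l}\cap N_{k,l+1})$ is constant on the locus already built, this step is a Zariski-locally-trivial Grassmannian bundle, the contribution of box $(k,l)$ being a Gaussian binomial coefficient, and the classes multiply; one then identifies these dimensions in terms of $\pi$ and verifies, via the $q$-Pascal and $q$-Vandermonde recursions, that the product telescopes to the formula of the theorem.

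The main obstacle is that $\dim(N_{k+1,l}\cap N_{k,l+1})$ is in general not constant on the partially built variety: whenever a box of $\pi$ has two distinct successors in its support, the two subspaces sitting above it can degenerate and their intersection jumps upward along a proper closed subvariety, so the fibre Grassmannian jumps. I would handle this by stratifying the partially built variety according to the relevant intersection dimensions; on each stratum the Grassmannian-bundle structure persists, so $[\M_{r,n}^T(\pi)]$ is a sum of products of Gaussian binomials, and running this through an induction on the number of boxes of $\pi$ (removing at each stage a box minimal among those not yet removed, and carrying along the forced intersection dimensions, which are determined by $\pi$ alone) reduces the identification with the stated product to an identity of $\bL$-binomial coefficients. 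A cleaner though less self-contained variant bypasses the stratification: since $\M_{r,n}^T$ is smooth and projective, apply the Bialynicki--Birula decomposition for a generic one-parameter subgroup of the residual framing torus $(\C^*)^r$, whose fixed points on $\M_{r,n}^T$ are the finitely many $T\times(\C^*)^r$-fixed points of $\M_{r,n}$, indexed by $r$-tuples of ordinary partitions $\vec\lambda=(\lambda^{(1)},\dots,\lambda^{(r)})$ with $\sum_a\mathbf 1_{\lambda^{(a)}}=\pi$; the known $T\times(\C^*)^r$-character of $T_{\vec\lambda}\M_{r,n}$ gives the dimension of each attracting cell, expressing $[\M_{r,n}^T(\pi)]$ as an explicit sum of powers of $\bL$ and turning the theorem into a combinatorial identity close to Vuletic's formula.
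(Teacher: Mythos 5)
Your first step---identifying a $T$-fixed point with a cyclic bigraded $\C[x,y]$-module and hence $\M_{r,n}^T(\pi)$ with the variety of families of subspaces $N_{k,l}\subseteq\C^r$ satisfying $N_{k,l}\subseteq N_{k+1,l}$, $N_{k,l}\subseteq N_{k,l+1}$ and $\dim N_{k,l}=r-\pi_{k,l}$---is correct, and is exactly the dual of the quiver description used in the paper (there $V_{-k,-l}=\C^r/N_{k,l}$ and the structure maps are the induced surjections). The gap is in the computation of the class, which is the entire content of the theorem. Your chosen order of construction, picking $N_{k,l}$ inside $N_{k+1,l}\cap N_{k,l+1}$, forces you to control an intersection whose dimension really does jump on proper closed subvarieties, and neither of your two escape routes is carried out: the stratification plan terminates in an unstated and unproved ``identity of $\bL$-binomial coefficients'' (and your parenthetical remark that the intersection dimensions are ``determined by $\pi$ alone'' contradicts the very obstacle you correctly identified two sentences earlier), while the Bialynicki--Birula plan only converts the theorem into an unproved combinatorial identity of comparable difficulty. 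As written, the proposal reduces the statement to another open statement rather than proving it.

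The jumping problem is an artifact of your ordering, and the paper's proof shows how to avoid it. One peels off a whole row at a time (the map $\rho\colon N(\pi)\to N(\widetilde\pi)$ with $\widetilde\pi_{i,j}=\pi_{i,j+1}$), and within the new row one makes the choices starting from the origin and moving outward: at step $(k,0)$ the conditions on $N_{k,0}$ are $N_{k-1,0}\subseteq N_{k,0}\subseteq N_{k,1}$, and the compatibility $N_{k-1,0}\subseteq N_{k-1,1}\subseteq N_{k,1}$ of the two outer spaces is automatic from constraints already imposed, so the choice is a point of a Grassmannian in an ambient space of \emph{constant} dimension $\pi_{k-1,0}-\pi_{k,1}$; no intersection of two independently varying subspaces ever appears. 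In the surjection language of the paper this is precisely Lemma~\ref{lemma:lemma1}: the new kernel $Ker(f_{k-1})$ is chosen inside $Ker(h_{k-1}g_{k-1})$, whose dimension is constant because a composition of surjections is surjective. With this ordering each box contributes a single Gaussian binomial whose shape is determined by $\pi$, and the product telescopes to the stated formula via the induction of Section~\ref{subsection:class}. If you redo your induction with this ordering, the rest of your setup can be kept.
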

Consider the map $\psi\colon\M_{r,n}\to\M_{r+1,n}$ defined by $E\mapsto E\oplus\mcO_{\P^2}$, where $E$ is a sheaf. The map $\psi$ is an embedding of $\M_{r,n}$ into $\M_{r+1,n}$. This embedding induces an embedding of $\M_{r,n}^T(\pi)$ into $\M_{r+1,n}^T(\pi)$. We denote by $\M_{\infty,n}^{T}(\pi)$ the limit space. The space $\M_{\infty,n}^T(\pi)$ has infinite dimension, but using a generalization of the ring $K_0(\nu_{\C})$ the class $\left[\M_{\infty,n}^T(\pi)\right]$ can be defined. The class $\left[\M_{\infty,n}^T(\pi)\right]$ is an infinite series in $\bL$ equal to $\lim_{r\to\infty}\left[\M_{r,n}^T(\pi)\right]$. From Theorem~\ref{theorem:irreducible component} it follows that 
\begin{gather*}
\left[\M_{\infty,n}^T(\pi)\right]=\prod_{i,j\ge 0}\frac{[\pi_{i,j}-\pi_{i+1,j+1}]!_{\bL}}{[\pi_{i,j}-\pi_{i+1,j}]!_{\bL}[\pi_{i,j}-\pi_{i,j+1}]!_{\bL}}.
\end{gather*}

The following statement shows a geometric interpretation of the series $F_{\pi}(q,0)$.

\begin{theorem}
$F_{\pi}(\bL,0)=\left[\M_{\infty,n}^T(\pi)\right]$.
\end{theorem}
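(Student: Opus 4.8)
The plan is to compute $F_\pi(q,0)$ straight from the definition and match it termwise with the limit formula for $\left[\M_{\infty,n}^T(\pi)\right]$ displayed after Theorem~\ref{theorem:irreducible component}. First I would work out the specialization $f(n,m)|_{t=0}$. For $m\ge 1$ every factor of $f(n,m)=\prod_{i=0}^{n-1}\frac{1-q^it^{m+1}}{1-q^{i+1}t^m}$ tends to $\frac{1-0}{1-0}=1$ as $t\to 0$, so $f(n,m)|_{t=0}=1$ for all $n\ge 0$; for $m=0$ we get $f(n,0)|_{t=0}=\prod_{i=0}^{n-1}\frac1{1-q^{i+1}}=\frac1{[n]!_q}$, valid for all $n\ge 0$ with the convention $[0]!_q=1$. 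Consequently, in the infinite product defining $F_\pi(i,j)(q,t)$ only the $m=0$ factor is not identically $1$ once we set $t=0$; since moreover $f(\lambda_1-\lambda_1,0)=f(0,0)=1$, this gives
\[
F_\pi(i,j)(q,0)=\left.\frac{f(\lambda_1-\mu_1,0)\,f(\lambda_1-\nu_1,0)}{f(\lambda_1-\lambda_1,0)\,f(\lambda_1-\lambda_2,0)}\right|_{t=0}=\frac{[\lambda_1-\lambda_2]!_q}{[\lambda_1-\mu_1]!_q\,[\lambda_1-\nu_1]!_q}.
\]

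Next I would substitute $\lambda_1=\pi_{i,j}$, $\lambda_2=\pi_{i+1,j+1}$, $\mu_1=\pi_{i+1,j}$ and $\nu_1=\pi_{i,j+1}$, noting that the monotonicity of a plane partition gives $\lambda_1\ge\mu_1,\nu_1\ge\lambda_2$, so all these $q$-factorials have nonnegative arguments. Taking the product over the boxes of $\pi$ yields
\[
F_\pi(q,0)=\prod_{(i,j)\in\pi}\frac{[\pi_{i,j}-\pi_{i+1,j+1}]!_q}{[\pi_{i,j}-\pi_{i+1,j}]!_q\,[\pi_{i,j}-\pi_{i,j+1}]!_q}.
\]
To identify this with $\left[\M_{\infty,n}^T(\pi)\right]=\prod_{i,j\ge 0}\frac{[\pi_{i,j}-\pi_{i+1,j+1}]!_\bL}{[\pi_{i,j}-\pi_{i+1,j}]!_\bL\,[\pi_{i,j}-\pi_{i,j+1}]!_\bL}$ it suffices to check that the product over boxes of $\pi$ and the product over all $i,j\ge 0$ coincide: if $(i,j)\notin\pi$, then $\pi_{i,j}=0$, hence $\pi_{i+1,j}=\pi_{i,j+1}=\pi_{i+1,j+1}=0$ by monotonicity, and the corresponding factor equals $\frac{[0]!_q}{[0]!_q\,[0]!_q}=1$. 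Setting $q=\bL$ completes the proof.

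There is essentially no serious obstacle here — the statement is a specialization plus some bookkeeping. The two places deserving a word of care are the well-definedness of the $t\to0$ limit of the infinite product over $m$, which is guaranteed precisely because all factors with $m\ge 1$ become $1$, and the reconciliation of the two indexing conventions (support boxes versus all $(i,j)\ge 0$), handled by the vanishing remark above. One may also recall, as already observed in the text, that the prefactor $\frac{[r]!_\bL}{[r-\pi_{0,0}]!_\bL}$ in Theorem~\ref{theorem:irreducible component} converges to $1$ as $r\to\infty$ in the relevant completion, which is why the limit formula for $\left[\M_{\infty,n}^T(\pi)\right]$ carries no such factor.
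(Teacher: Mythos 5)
Your computation is correct and is exactly the ``direct computation'' that the paper's one-line proof alludes to: specializing $f(n,m)$ at $t=0$ kills all factors with $m\ge 1$, leaves $f(n,0)|_{t=0}=1/[n]!_q$, and the product over boxes matches the formula for $\left[\M_{\infty,n}^T(\pi)\right]$ term by term. No discrepancy with the paper's approach.
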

\begin{proof}
Direct computation.
\end{proof}
    
Using the result of M. Vuletic we obtain the following corollary.

\begin{corollary}
\begin{gather*}
\sum_{n\ge 0}\left[\M_{\infty,n}^T\right]t^n=\prod_{\substack{i\ge 0\\j\ge 1}}\frac{1}{(1-\bL^it^j)^j}.
\end{gather*}  
\end{corollary}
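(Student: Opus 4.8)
The plan is to deduce the corollary from the two theorems already established together with Vuletic's identity \eqref{Vuletic formula}, so that no further geometry is needed. The whole argument is a bookkeeping of generating functions once the geometric input is in place.

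First I would record the decomposition of the fixed locus. By the discussion preceding Theorem~\ref{theorem:irreducible component}, for each $n$ and $r$ one has $\M_{r,n}^T=\bigsqcup_{\pi}\M_{r,n}^T(\pi)$, the disjoint union over plane partitions $\pi$ with $|\pi|=n$ and $\pi_{0,0}\le r$; the pieces are pairwise disjoint because the plane partition attached to a $T$-fixed sheaf is a discrete invariant (read off from the local structure of the sheaf at the two $T$-fixed points of $\C^2$), so there are no overlaps and hence no inclusion--exclusion corrections. Relation (2) in the definition of $K_0(\nu_{\C})$ then gives $\left[\M_{r,n}^T\right]=\sum_{\pi}\left[\M_{r,n}^T(\pi)\right]$. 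Letting $r\to\infty$: for $r\ge n$ the index set stabilizes to the finite set $\{\pi\in\mathcal P:|\pi|=n\}$, and each summand converges to $\left[\M_{\infty,n}^T(\pi)\right]=\lim_{r\to\infty}\left[\M_{r,n}^T(\pi)\right]$, so the (finite) sum passes to the limit termwise and
\begin{gather*}
\left[\M_{\infty,n}^T\right]=\sum_{|\pi|=n}\left[\M_{\infty,n}^T(\pi)\right].
\end{gather*}

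Next I would invoke the preceding theorem, $F_{\pi}(\bL,0)=\left[\M_{\infty,n}^T(\pi)\right]$, to rewrite the generating series as
\begin{gather*}
\sum_{n\ge 0}\left[\M_{\infty,n}^T\right]t^n=\sum_{n\ge 0}t^n\sum_{|\pi|=n}F_{\pi}(\bL,0)=\sum_{\pi\in\mathcal P}F_{\pi}(\bL,0)\,t^{|\pi|}.
\end{gather*}
Then I would apply \eqref{Vuletic formula} with the substitution $s=t$, $q=\bL$, and with the remaining variable (the parameter denoted $t$ in \eqref{Vuletic formula}) specialized to $0$: every numerator $1-t\,s^nq^k$ becomes $1$, and the right-hand side collapses to $\prod_{n\ge 1}\prod_{k\ge 0}(1-t^n\bL^k)^{-n}$, which is exactly the asserted product after renaming the index $k$ to $i$ and the index $n$ to $j$.

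The only point that needs care --- and the closest thing to an obstacle --- is formal rather than geometric. Vuletic's identity \eqref{Vuletic formula} is an equality of formal power series in $s$ with rational-function coefficients in $q,t$; one must check that it remains valid after the substitution $q=\bL$ and $t=0$ inside the completed, $\bL$-linear generalization of $K_0(\nu_{\C})$ in which the infinite $\bL$-series $\left[\M_{\infty,n}^T\right]$ lives. Setting the parameter to $0$ is an honest specialization and leaves a series that is $t$-adically convergent with coefficients in that ring, so this is unproblematic; similarly one should note that the ambient ring is chosen precisely so that the classes $\left[\M_{\infty,n}^T(\pi)\right]$ exist and that, since for fixed $n$ there are only finitely many plane partitions, the sum over $\pi$ commutes with the $r\to\infty$ limit. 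With these formalities dispatched, the corollary is immediate.
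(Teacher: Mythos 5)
Your proposal is correct and is exactly the argument the paper intends (the paper gives no written proof beyond "Using the result of M. Vuletic"): decompose $\M_{\infty,n}^T$ over plane partitions of weight $n$ using the stratification from Section \ref{subsection:irreducible components}, identify each class with $F_{\pi}(\bL,0)$ via the preceding theorem, and specialize Vuletic's identity \eqref{Vuletic formula} at $t=0$ with $q=\bL$, $s=t$. The formal care about the completed Grothendieck ring and the stabilization of the index set for $r\ge n$ is appropriate and matches the paper's setup.
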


\subsection{Combinatorial identity}

Here we give an application of Theorem \ref{theorem:irreducible component}. In Section \ref{section:combinatorial identity} we use the $T$-action on $\M_{r,n}$ to get a decomposition of $\M_{r,n}$ into locally closed subvarieties. These subvarieties are locally trivial bundles over the varieties $\M_{r,n}^T(\pi)$. Then Theorem \ref{theorem:irreducible component} can be applied to obtain the following statement.
 
\begin{theorem}\label{theorem:combinatorial identity}
\begin{gather*}
\sum_{\substack{\pi\in\mathcal P\\\pi_{0,0}\le r}}t^{|\pi|}\frac{[r]!_{q}}{[r-\pi_{0,0}]!_{q}}q^{\chi(\pi)}F_{\pi}(q,0)=\prod_{\substack{n\ge 1\\1\le m\le r}}\frac{1}{1-q^mt^n},
\end{gather*}
where $\chi(\pi)=\sum_{i,j\ge 0}\pi_{i,j}(\pi_{i,j}-\pi_{i,j+1})$. In particular 
\begin{gather*}
\sum_{\pi\in\mathcal P}t^{|\pi|}q^{\chi(\pi)}F_{\pi}(q,0)=\prod_{m,n\ge 1}\frac{1}{1-q^mt^n}.
\end{gather*}
\end{theorem}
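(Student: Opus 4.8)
The plan is to decompose $\M_{r,n}$ into locally closed pieces indexed by the irreducible components of $\M_{r,n}^T$, compute the class of each piece in $K_0(\nu_\C)$ as a product of a power of $\bL$ with the class $[\M_{r,n}^T(\pi)]$, and then evaluate the resulting identity in $K_0(\nu_\C)$ by sending $\bL\mapsto q$ and using a known evaluation of $\sum_n [\M_{r,n}]t^n$. Concretely, I would use the Bialynicki-Birula-type stratification associated to a generic one-parameter subgroup $\C^*\hookrightarrow T$: since $\M_{r,n}$ is smooth and the fixed locus $\M_{r,n}^T$ has finitely many components $\M_{r,n}^T(\pi)$, the variety decomposes as a disjoint union $\M_{r,n}=\bigsqcup_\pi \M_{r,n}^+(\pi)$, where $\M_{r,n}^+(\pi)$ is the attracting set of $\M_{r,n}^T(\pi)$, and each attracting set is an affine-space bundle over $\M_{r,n}^T(\pi)$ of some rank $d(\pi)$. (This is exactly the construction promised in Section~\ref{section:combinatorial identity} of the excerpt, which I am entitled to assume works.) Hence in $K_0(\nu_\C)$ one gets
\begin{gather*}
\left[\M_{r,n}\right]=\sum_{\substack{\pi\in\mathcal P\\|\pi|=n,\ \pi_{0,0}\le r}}\bL^{d(\pi)}\left[\M_{r,n}^T(\pi)\right].
\end{gather*}

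Next I would identify the exponent $d(\pi)$ as the dimension of the positive part of the tangent space $T_p\M_{r,n}$ at a (any) point $p\in\M_{r,n}^T(\pi)$ with respect to the chosen generic cocharacter, restricted to the directions transverse to $\M_{r,n}^T(\pi)$ itself. The torus weights on $T_p\M_{r,n}$ are computable from the quiver/ADHM description recalled in Section~\ref{subsection:quiver description}; for $r=1$ this is the classical tangent-space character of the Hilbert scheme at a monomial ideal, and for general $r$ it is the rank-$r$ analogue. The claim to be verified is that after subtracting the weights tangent to $\M_{r,n}^T(\pi)$ and counting those with positive pairing against the generic cocharacter, one obtains exactly $d(\pi)=\chi(\pi)=\sum_{i,j\ge0}\pi_{i,j}(\pi_{i,j}-\pi_{i,j+1})$. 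I expect this to reduce, via the column-by-column reading of a plane partition as a sequence of ordinary partitions, to a bookkeeping identity about arm/leg statistics; the combinatorics is the same flavor as in \cite{Buryak}.

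With the stratification in hand, I would substitute Theorem~\ref{theorem:irreducible component} for $[\M_{r,n}^T(\pi)]$ and then pass to the generating function in $n$. Summing over $n$ and setting $\bL=q$, the left-hand side becomes exactly $\sum_{\pi,\ \pi_{0,0}\le r}t^{|\pi|}\frac{[r]!_q}{[r-\pi_{0,0}]!_q}q^{\chi(\pi)}F_\pi(q,0)$ once we use the second displayed formula after Theorem~\ref{theorem:irreducible component} to recognize the $\pi$-product as $F_\pi(\bL,0)$ (really as $[\M_{\infty,n}^T(\pi)]$ times the $[r]!/[r-\pi_{0,0}]!$ correction factor coming from the finite-$r$ formula), and we reinsert the power $q^{\chi(\pi)}=\bL^{d(\pi)}$. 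The right-hand side is then $\sum_n [\M_{r,n}]t^n$ evaluated at $\bL=q$. So the last ingredient is the evaluation $\sum_{n\ge0}[\M_{r,n}]t^n = \prod_{n\ge1,\,1\le m\le r}(1-\bL^m t^n)^{-1}$ in $K_0(\nu_\C)$ (or rather in a suitable completion), which follows from the fact that $\M_{r,n}$ itself admits a Bialynicki-Birula decomposition into affine spaces indexed by $r$-tuples of Young diagrams of total size $n$, each of known dimension — this is standard for the ADHM variety and is the motivic/Poincaré-polynomial refinement of Nakajima's fixed-point count. The in particular statement is the $r\to\infty$ limit: the factor $\frac{[r]!_q}{[r-\pi_{0,0}]!_q}\to 1$ coefficientwise, the constraint $\pi_{0,0}\le r$ disappears, and $\prod_{n\ge1,\,1\le m\le r}(1-q^mt^n)^{-1}\to\prod_{m,n\ge1}(1-q^mt^n)^{-1}$, all limits being well-defined coefficient-by-coefficient in $t$.

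The main obstacle will be the precise identification of the bundle rank $d(\pi)$ with $\chi(\pi)$: one must choose the genericity chamber for the cocharacter correctly, read off the $T$-weights of $T_p\M_{r,n}$ from the quiver data at an arbitrary point of $\M_{r,n}^T(\pi)$ (not just at an isolated fixed point, since for $r\ge2$ these components are positive-dimensional), and then carry out the arm/leg sign-counting. A secondary subtlety is purely formal: the identity lives in a completion of $K_0(\nu_\C)$ allowing infinite series in $\bL$, so one should check that the substitution $\bL=q$ and the passage to the $t$-generating function are legitimate and that the $r\to\infty$ limit commutes with extracting $t$-coefficients — all of which hold because everything is finite in each fixed $t$-degree.
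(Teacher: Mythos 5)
Your strategy is exactly the paper's: a Bialynicki--Birula decomposition of $\M_{r,n}$ with respect to a generic cocharacter (the paper uses $T_{1,\alpha}$ with $\alpha\gg 0$), identification of each attracting set as an affine bundle over $\M_{r,n}^T(\pi)$, computation of the fiber dimension from the tangent weights \eqref{weight decomposition} at a $T\times(\C^*)^r$-fixed point of the component, and then substitution of Theorem~\ref{theorem:irreducible component} into the known generating series for $[\M_{r,n}]$. The one concrete issue is that two of your intermediate formulas are each off by a factor of $\bL^{rn}$: the attracting-set fiber dimension is not $\chi(\pi)$ but $d^+(\pi)=rn+\chi(\pi)$ (already for $r=n=1$ the attracting set is all of $\M_{1,1}=\C^2$, so the fiber dimension is $2=rn+\chi(\pi)$, not $\chi(\pi)=1$), and the motivic generating series is $\sum_{n\ge 0}[\M_{r,n}]t^n=\prod_{m=1}^{r}\prod_{n\ge 1}(1-\bL^{rn+m}t^n)^{-1}$, not $\prod(1-\bL^m t^n)^{-1}$ (the top $\bL$-power in degree $t^n$ must be $\bL^{2rn}=\dim\M_{r,n}$). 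These two errors cancel --- they differ by the same substitution $t\mapsto\bL^{r}t$ --- so your final identity is correct, but as written both supporting claims are false, and if you carried out the arm/leg count you flag as ``the main obstacle'' you would get $rn+\chi(\pi)$ and find an apparent contradiction with your quoted generating function. Everything else (the reduction of the weight count to $\sum_{i,j}\pi_{i,j}(\pi_{i,j}-\pi_{i,j+1})$, the constancy of the fiber dimension along the positive-dimensional components, and the $r\to\infty$ limit) matches the paper.
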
 

\subsection{Organization of the paper}

In Section \ref{section:moduli space} we recall the quiver description of the moduli space $\M_{r,n}$. Then we use it to describe the irreducible components of the variety $\M_{r,n}^T$. Finally, we prove Theorem \ref{theorem:irreducible component}. Section \ref{section:combinatorial identity} contains the proof of Theorem \ref{theorem:combinatorial identity}.

\subsection{Acknowledgments}

The author is grateful to S. M. Gusein-Zade and B. L. Feigin for suggesting the area of research. The author is grateful to S. Shadrin for useful discussions. 

\section{Moduli space of sheaves on $\P^2$}\label{section:moduli space}

\subsection{Quiver description of $\M_{r,n}$}\label{subsection:quiver description}

The variety $\M_{r,n}$ has the following quiver description (see e.g. \cite{Nakajima}).
\begin{gather*}
\M_{r,n}\cong\left.\left\{(B_1,B_2,i,j)\left|
\begin{smallmatrix}
1) [B_1,B_2]+ij=0\\
2) \text{(stability) There is no subspace} \\
\text{$S\subsetneq\C^n$ such that $B_{\alpha}(S)\subset S$ ($\alpha=1,2$)}\\
\text{and $im(i)\subset S$} 
\end{smallmatrix}\right.\right\}\right/GL_n(\C),
\end{gather*}
where $B_1,B_2\in End(\C^n), i\in Hom(\C^r,\C^n)$ and $j\in Hom(\C^n,\C^r)$ with the action given by 
\begin{gather*}
g\cdot(B_1,B_2,i,j)=(gB_1g^{-1},gB_2g^{-1},gi,jg^{-1})
\end{gather*} 
for $g\in GL_n(\C)$. 

In the quiver description the map $\psi\colon\M_{r,n}\to\M_{r+1,n}$ is induced by the coordinate embedding of $\C^r$ into $\C^{r+1}$.

\subsection{Irreducible components of $\M^T_{r,n}$}\label{subsection:irreducible components}

In terms of Section \ref{subsection:quiver description} the $T$-action on $\M_{r,n}$ is given by (see e.g. \cite{Nakajima2})
$$
(t_1,t_2)\cdot[(B_1,B_2,i,j)]=[(t_1B_1,t_2B_2,i,t_1t_2j)].
$$ 
By definition, $[(B_1,B_2,i,j)]\in\M_{r,n}$ is a fixed point if and only if there exists a homomorphism $\lambda\colon T\to GL_n(\C)$ satisfying the following conditions: 
\begin{align}\label{formula:fixed point}
t_1B_1&=\lambda(t)^{-1}B_1\lambda(t),\notag\\
t_2B_2&=\lambda(t)^{-1}B_2\lambda(t),\\
i&=\lambda(t)^{-1}i,\notag\\
t_1t_2j&=j\lambda(t).\notag
\end{align} 
Suppose that $[(B_1,B_2,i,j)]$ is a fixed point. Then we have the weight decomposition of $\C^n$ with respect to $\lambda(t)$, i.e. $\C^n=\bigoplus_{k,l} V_{k,l}$, where $V_{k,l}=\{v\in \C^n|\lambda(t)\cdot v=t_1^kt_2^lv\}$. From the conditions \eqref{formula:fixed point} it follows that the only components of $B_1$, $B_2$, $i$ and $j$ which might survive are 
\begin{align*}
B_1&\colon V_{k,l}\to V_{k-1,l},\\
B_2&\colon V_{k,l}\to V_{k,l-1},\\
i&\colon \C^r\to V_{0,0},\\
j&\colon V_{1,1}\to \C^r.
\end{align*}
From the stability condition it follows that 
\begin{align*}
&V_{k,l}=0, \text{if $k>0$ or $l>0$},\\
&j=0,\\
&\dim V_{0,0}\le r,\\
&\dim V_{k,l}\ge \dim V_{k-1,l},\\
&\dim V_{k,l}\ge \dim V_{k,l-1}. 
\end{align*}
We see that the numbers $(\dim V_{-k,-l})_{k,l\ge 0}$ form a plane partition. 

Let $(\pi_{i,j})_{i,j\ge 0}$ be a plane partition such that $|\pi|=n$ and $\pi_{0,0}\le r$. Let $\M_{r,n}^T(\pi)$ be the subset of points from $\M_{r,n}^T$ such that $\dim V_{-k,-l}=\pi_{k,l}$. It is easy to see that $\M_{r,n}^T(\pi)$ is a closed subvariety of $\M_{r,n}^T$ and 
$$
\M_{r,n}^T=\coprod_{\substack{\pi\in\mathcal P\\ \pi_{0,0}\le r}}\M_{r,n}^T(\pi).
$$ 

We see that the variety $\M_{r,n}^T(\pi)$ has the following quiver description. Let $V_{-k,-l}=\C^{\pi_{k,l}}$. Then
\begin{multline*}
\M_{r,n}^T(\pi)\cong\\
\left.\left\{((B_{1,k,l})_{k,l\ge 0},(B_{2,k,l})_{k,l\ge 0},i)\left|
\begin{smallmatrix}
1) B_{1,k,l+1}B_{2,k,l}=B_{2,k+1,l}B_{1,k,l}\\
2) \text{$B_{1,k,l}, B_{2,k,l},i$ are surjective}
\end{smallmatrix}\right.\right\}\right/\prod_{k,l}GL_{\pi_{k,l}},
\end{multline*}
where $B_{1,k,l}\in Hom(V_{-k,-l},V_{-k-1,-l})$, $B_{2,k,l}\in Hom(V_{-k,-l},V_{-k,-l-1})$ and $i\in Hom(\C^r,V_{0,0})$ (see Figure \ref{quiver1}). 

\begin{figure}
$$
\xymatrix{
                      &                                    &                                          &                             & \\
                      & V_{0,-2}\ar[r]^-{B_1} \ar[u]^-{B_2} &                                          &                             & \\
                      & V_{0,-1}\ar[r]^-{B_1} \ar[u]^-{B_2} & V_{-1,-1}\ar[r]^-{B_1} \ar[u]^-{B_2}       &                             & \\
\C^r\ar[r]^-(0.45){i} & V_{0,0}\ar[r]^-{B_1} \ar[u]^-{B_2} & V_{-1,0}\ar[r]^-{B_1} \ar[u]^-{B_2} & V_{-2,0}\ar[r]^-{B_1} \ar[u]^-{B_2}&
}
$$
\caption{The quiver description of $\M_{r,n}^T(\pi)$}
\label{quiver1}
\end{figure}
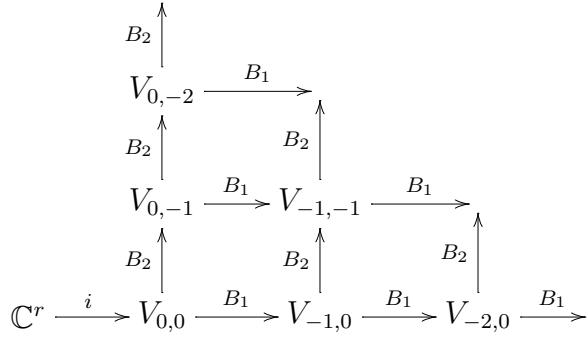

From Theorem~\ref{theorem:irreducible component} it follows that the class of $\M_{r,n}^T(\pi)$ is a polynomial in $\bL$. The coefficient of $\bL^i$ in this polynomial is equal to the $2i$-th Betti number of $\M_{r,n}^T(\pi)$. By Theorem~\ref{theorem:irreducible component}, the coefficient of $\bL^0$ is equal to~$1$, hence the variety $\M_{r,n}^T(\pi)$ is irreducible.

\subsection{Proof of Theorem \ref{theorem:irreducible component}}\label{subsection:class}

Consider partitions $\mu=(\mu_1,\mu_2,\ldots,\mu_k)$ and $\nu=(\nu_1,\nu_2,\ldots,\nu_k)$ such that $\mu_i\ge\nu_i$. Let $h=(h_i)_{1\le i\le k-1}$, where $h_i\in Hom(\C^{\nu_i},\C^{\nu_{i+1}})$ are surjective homomorphisms. We consider the variety $N_{\mu,\nu,h}$ defined by
$$
N_{\mu,\nu,h}=\left\{((f_i)_{1\le i\le k-1},(g_i)_{1\le i\le k})\left|
\begin{smallmatrix}
1) g_{i+1}f_i=h_i g_i\\
2) \text{$f_i$ and $g_i$ are surjective}
\end{smallmatrix}
\right.\right\},
$$
where $f_i\in Hom(\C^{\mu_i},\C^{\mu_{i+1}})$ and $g_i\in Hom(\C^{\mu_i},\C^{\nu_i})$. It is easy to see that the variety $N_{\mu,\nu,h}$ is smooth.
\begin{lemma}\label{lemma:lemma1}
\begin{gather}\label{eq:lemma}
\left[N_{\mu,\nu,h}\right]=\frac{[\mu_1]!_{\bL}\left[GL_{\nu_1}\right]}{[\nu_1]!_{\bL}[\mu_1-\nu_1]!_{\bL}}\prod_{i=1}^{k-1}\frac{[\mu_i-\nu_{i+1}]!_{\bL}\left[GL_{\mu_{i+1}}\right]}{[\mu_i-\mu_{i+1}]!_{\bL}[\mu_{i+1}-\nu_{i+1}]!_{\bL}}.
\end{gather}
\end{lemma}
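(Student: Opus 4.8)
The plan is to compute the class $\left[N_{\mu,\nu,h}\right]$ by peeling off the data $(f_i,g_i)$ one index at a time, starting from $i=1$, and organizing the computation as an iterated sequence of fibrations whose bases and fibers are (affine bundles over) quotients of spaces of surjective linear maps. The key observation is that once $g_1$ is chosen (a surjective map $\C^{\mu_1}\to\C^{\nu_1}$), the relation $g_2 f_1 = h_1 g_1$ together with surjectivity of $f_1$ and $g_2$ determines $f_1$ up to an affine ambiguity: given a generic pair $(f_1,g_2)$, the constraint says that $g_2$ factors the composite $h_1 g_1$ through $f_1$, so the choice of $f_1$ is the choice of a surjection $\C^{\mu_1}\to\C^{\mu_2}$ refining the surjection $h_1 g_1\colon \C^{\mu_1}\to\C^{\nu_2}$, and then $g_2$ is forced on the image of $f_1$ and free on a complement. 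Iterating, the whole variety fibers over the space of surjections $\C^{\mu_1}\to\C^{\nu_1}$ with fibers built from analogous surjection spaces at each subsequent level.

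Concretely, I would first recall the class of the space $\mathrm{Surj}(\C^a,\C^b)$ of surjective linear maps: it is $\bL^{ab}\prod_{i=0}^{b-1}(1-\bL^{i-a})$ up to normalization, or more usefully, the space of surjections $\C^a\to\C^b$ is a $GL_b$-torsor over the Grassmannian $Gr(a-b,a)$ (sending a surjection to its kernel), hence has class $[GL_b]\cdot\binom{a}{b}_{\bL}$ where $\binom{a}{b}_{\bL}=\frac{[a]!_{\bL}}{[b]!_{\bL}[a-b]!_{\bL}}$. This identity is the source of every binomial-type factor in \eqref{eq:lemma}. The factor $\frac{[\mu_1]!_{\bL}[GL_{\nu_1}]}{[\nu_1]!_{\bL}[\mu_1-\nu_1]!_{\bL}}$ is precisely the class of $\mathrm{Surj}(\C^{\mu_1},\C^{\nu_1})$, the space of allowed $g_1$, and each factor $\frac{[\mu_i-\nu_{i+1}]!_{\bL}[GL_{\mu_{i+1}}]}{[\mu_i-\mu_{i+1}]!_{\bL}[\mu_{i+1}-\nu_{i+1}]!_{\bL}}$ should emerge as the class of the space of ways to choose $f_i$ (and the remaining freedom in $g_{i+1}$) once everything up to level $i$ is fixed — a space of surjections $\C^{\mu_i}\to\C^{\mu_{i+1}}$ compatible with the already-determined surjection $\C^{\mu_i}\to\C^{\nu_{i+1}}$, which by a relative version of the Grassmannian argument is a $GL_{\mu_{i+1}}$-torsor over a partial-flag-type variety with the stated class. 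I would carry this out by induction on $k$: remove $(f_{k-1},g_k)$, identify the projection to the $N_{\mu',\nu',h'}$ with truncated partitions as a fibration, compute the fiber class, and multiply.

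The main obstacle I expect is making the fibration structure precise and checking that it is genuinely Zariski-locally trivial (so that $[\,\text{total}\,]=[\,\text{base}\,]\cdot[\,\text{fiber}\,]$ in $K_0(\nu_{\C})$), and in particular handling the interplay between the two surjectivity conditions and the commuting relation simultaneously. The subtlety is that fixing $g_i$ does not immediately trivialize the choice of $f_i$: one must first understand, for fixed $g_i$, the locus of $f_i$ such that $h_i g_i$ factors through $f_i$, and argue that over a suitable stratification (or in fact globally, using that $h_i g_i$ is surjective) the pair $(f_i, g_{i+1})$ is parametrized by a $GL_{\mu_{i+1}}$-torsor over a variety of quotients, with the torsor trivialized after choosing a splitting. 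Dealing with the $g_i$'s that are not determined by the relation — i.e. the part of $g_{i+1}$ living on a complement of $\mathrm{im}(f_i)$ — and bookkeeping how it combines with the next level's constraint is where the indices in \eqref{eq:lemma} must be matched carefully; I anticipate this is the step where a naive count gives the wrong exponent unless one is careful about which space is a torsor over which base.
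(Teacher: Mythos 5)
Your proposal is essentially the paper's proof: induction on $k$, peeling off the pair $(f_{k-1},g_k)$, with the fiber identified as a $GL_{\mu_k}$-torsor over the Grassmannian $Gr_{\mu_{k-1}-\mu_k}(Ker(h_{k-1}g_{k-1}))$ (a space of dimension $\mu_{k-1}-\nu_k$), and with the base case being the space of surjections $\C^{\mu_1}\to\C^{\nu_1}$, exactly as you describe. The one difficulty you flag --- the part of $g_{i+1}$ ``free on a complement of $\mathrm{im}(f_i)$'' --- is vacuous, since $f_i$ is required to be surjective, so $g_{i+1}$ is completely determined by $g_{i+1}f_i=h_ig_i$ once $Ker(f_i)\subset Ker(h_ig_i)$, and no extra bookkeeping is needed.
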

\begin{proof}
The proof is by induction on $k$. Suppose $k=1$, then $N_{\mu,\nu,h}$ is the variety of $\mu_1\times \nu_1$-matrices of the maximal rank. The class of this variety is equal to $\frac{[\mu_1]!_{\bL}\left[GL_{\nu_1}\right]}{[\mu_1-\nu_1]!_{\bL}[\nu_1]!_{\bL}}$.

Suppose $k\ge 2$. Obviously, we have $Ker(f_{k-1})\subset Ker(h_{k-1} g_{k-1})$. Let $Gr_M(V)$ be the Grassmanian of $M$-dimensional vector subspaces in a vector space $V$. Let $h'=(h_1,h_2,\ldots,h_{k-2})$, $\mu'=(\mu_1,\mu_2,\ldots,\mu_{k-1})$ and $\nu'=(\nu_1,\nu_2,\ldots,\nu_{k-1})$. Let 
$$
\widetilde N_{\mu',\nu,h}=\left\{(f',g',v)\left|(f',g')\in N_{\mu',\nu',h'},v\in Gr_{\mu_{k-1}-\mu_k}(Ker(h_{k-1} g_{k-1}'))\right.\right\}.
$$
We define the map $\phi\colon N_{\mu,\nu,h}\to \widetilde N_{\mu',\nu,h}$ by the following formula
$$
((f_i)_{1\le i\le k-1},(g_i)_{1\le i\le k})\mapsto ((f_i)_{1\le i\le k-2},(g_i)_{1\le i\le k-1},Ker(f_{k-1})). 
$$
It is easy to see that the map $\phi$ is a locally trivial bundle with the variety $GL_{\mu_k}$ as the fiber. Therefore, we have 
\begin{align*}
\left[N_{\mu,\nu,h}\right]&=\left[\widetilde N_{\mu',\nu,h}\right]\left[GL_{\mu_k}\right]=\left[N_{\mu',\nu',h'}\right]\left[Gr_{\mu_{k-1}-\mu_k}(\C^{\mu_{k-1}-\nu_k})\right]\left[GL_{\mu_k}\right]=\\
&=\left[N_{\mu',\nu',h'}\right]\frac{[\mu_{k-1}-\nu_k]!_{\bL}\left[GL_{\mu_k}\right]}{[\mu_{k-1}-\mu_k]!_{\bL}[\mu_k-\nu_k]!_{\bL}}.
\end{align*}
By the induction hypothesis this is equal to the right-hand side of~\eqref{eq:lemma}.
\end{proof}
It is easy to see that the varieties $N_{\mu,\nu,h}$ are isomorphic for different choices of the maps $h_i$.
Let 
$$
N(\pi)=\left\{((B_{1,i,j})_{i,j\ge 0},(B_{2,i,j})_{i,j\ge 0})\left|
\begin{smallmatrix}
1) B_{1,i,j+1}B_{2,i,j}=B_{2,i+1,j}B_{1,i,j}\\
2) \text{$B_{\alpha,i,j}$ are surjective}
\end{smallmatrix}\right.\right\},
$$
where $B_{1,i,j}\in Hom(V_{-i,-j},V_{-i-1,-j})$, $B_{2,i,j}\in Hom(V_{-i,-j},V_{-i,-j-1})$. 

Theorem \ref{theorem:irreducible component} is equivalent to the following equation
\begin{gather}\label{eq:equation2}
\left[N(\pi)\right]=\frac{[\pi_{0,0}]!_{\bL}}{\left[GL_{\pi_{0,0}}\right]}\prod_{i,j\ge 0}\frac{[\pi_{i,j}-\pi_{i+1,j+1}]!_{\bL}\left[GL_{\pi_{i,j}}\right]}{[\pi_{i,j}-\pi_{i+1,j}]!_{\bL}[\pi_{i,j}-\pi_{i,j+1}]!_{\bL}}.
\end{gather}

Let $l$ be the number of rows in the support of $\pi$. The proof of \eqref{eq:equation2} is by induction on $l$. Suppose $l=0$, then \eqref{eq:equation2} is obvious. Suppose $l\ge 1$. Let $\widetilde\pi$ be the plane partition defined by $\widetilde\pi_{i,j}=\pi_{i,j+1}$. Consider a point $((B_{1,i,j}),(B_{2,i,j}))\in N(\pi)$. If we forget the maps $B_{1,i,0}$ and $B_{2,i,0}$, then we obtain a point from $N(\widetilde\pi)$. This defines the map $\rho\colon N(\pi)\to N(\widetilde\pi)$. It is easy to see that the map $\rho$ is a locally trivial bundle. The fiber over a point $((\tB_{1,i,j}),(\tB_{2,i,j}))\in N(\tpi)$ is the variety $N_{\mu,\nu,h}$, where $\mu_i=\pi_{i,0}$, $\nu_i=\pi_{i,1}$ and $h_i=\tB_{1,i,0}$. Using the induction hypothesis and Lemma \ref{lemma:lemma1} we get
\begin{align*}
\left[N(\pi)\right]=&\left(\frac{[\pi_{0,1}]!_{\bL}}{\left[GL_{\pi_{0,1}}\right]}\prod_{i\ge 0, j\ge 1}\frac{[\pi_{i,j}-\pi_{i+1,j+1}]!_{\bL}\left[GL_{\pi_{i,j}}\right]}{[\pi_{i,j}-\pi_{i+1,j}]!_{\bL}[\pi_{i,j}-\pi_{i,j+1}]!_{\bL}}\right)\times\\
&\times\frac{[\pi_{0,0}]!_{\bL}\left[GL_{\pi_{0,1}}\right]}{[\pi_{0,1}]!_{\bL}[\pi_{0,0}-\pi_{0,1}]!_{\bL}}\prod_{i\ge 0}\frac{[\pi_{i,0}-\pi_{i+1,1}]!_{\bL}\left[GL_{\pi_{i+1,0}}\right]}{[\pi_{i,0}-\pi_{i+1,0}]!_{\bL}[\pi_{i+1,0}-\pi_{i+1,1}]!_{\bL}}=\\
&=\frac{[\pi_{0,0}]!_{\bL}}{\left[GL_{\pi_{0,0}}\right]}\prod_{i,j\ge 0}\frac{[\pi_{i,j}-\pi_{i+1,j+1}]!_{\bL}\left[GL_{\pi_{i,j}}\right]}{[\pi_{i,j}-\pi_{i+1,j}]!_{\bL}[\pi_{i,j}-\pi_{i,j+1}]!_{\bL}}.
\end{align*}
This completes the proof of Theorem \ref{theorem:irreducible component}.

\section{Proof of Theorem \ref{theorem:combinatorial identity}}\label{section:combinatorial identity}

Let $T_{a,b}=\{(t^a,t^b)\in T|t\in\C^*\}$, be a one dimensional subtorus of $T$. Consider the $T_{1,\alpha}$-action on $\M_{r,n}$, where $\alpha$ is a positive integer. Suppose $\alpha$ is big enough. Then the set of fixed points of the $T_{1,\alpha}$-action coincides with the set of fixed points of the $T$-action on $\M_{r,n}$. We define the map $\rho\colon\M_{r,n}\to\M_{r,n}^T$ by the following formula $\rho(p)=\lim_{t\to 0}t\cdot p$, where $p\in\M_{r,n}$ and $t\in T_{1,\alpha}$. Therefore, we have 
\begin{gather*}
\M_{r,n}=\coprod_{\substack{\pi\in\mathcal P\\\pi_{0,0}\le r}}\rho^{-1}(\M_{r,n}^T(\pi)).
\end{gather*} 
From \cite{B1,B2} it follows that $\rho^{-1}(\M_{r,n}^T(\pi))$ is a locally closed subvariety such that the map $\rho^{-1}\left(\M_{r,n}^T(\pi)\right)\xrightarrow{\rho}\M_{r,n}^T(\pi)$ is a locally trivial bundle with an affine space as the fiber. We denote by $d^+_{1,\alpha}(\pi)$ the dimension of the fiber. Therefore, we have
\begin{gather}\label{identity}
\left[\M_{r,n}\right]=\sum_{\substack{\pi\in\mathcal P\\\pi_{0,0}\le r}}\left[\M_{r,n}^T(\pi)\right]\bL^{d^+_{1,\alpha}(\pi)}.
\end{gather}
Let us prove that 
\begin{gather}\label{dimplus}
d^+_{1,\alpha}(\pi)=rn+\sum_{i,j\ge 0}\pi_{i,j}(\pi_{i,j}-\pi_{i,j+1}).
\end{gather}

The $T$-action on $\M_{r,n}$ is the part of the action of the $(r+2)$-dimensional torus $T\times (\C^*)^r$. In terms of Section \ref{subsection:quiver description} this action is given by (see e.g. \cite{Nakajima2})
\begin{gather*}
(t_1,t_2,e_1,e_2,\ldots,e_r)\cdot[(B_1,B_2,i,j)]=[(t_1B_1,t_2B_2,ie^{-1},t_1t_2ej)],
\end{gather*}
where $e=diag(e_1,e_2,\ldots,e_r)$ is the diagonal $r\times r$-matrix.

The set of fixed points of the $T\times(\C^*)^r$-action is finite and is parametrized by the set of $r$-tuples $D=(D_1,D_2,\ldots,D_r)$ of Young diagrams $D_i$, such that $\sum_{i=1}^r|D_i|=n$ (see e.g. \cite{Nakajima2}). It is easy to see that the fixed point corresponding to an $r$-tuple $D$ belongs to $\M_{r,n}^T(\pi)$, where $\pi_{i,j}=|\{\alpha|(i,j)\in D_{\alpha}\}|$. 

For a Young diagram $Y$ let 
\begin{align*}
&r_l(Y)=|\{(i,j)\in D|j=l\}|,\\
&c_l(Y)=|\{(i,j)\in D|i=l\}|.
\end{align*}
For a point $s=(i,j)\in\Z_{\ge 0}^2$ let
\begin{align*}
&l_Y(s)=r_j(Y)-i-1,\\
&a_Y(s)=c_i(Y)-j-1,
\end{align*}
see Figure \ref{pic1}. Note that $l_Y(s)$ and $a_Y(s)$ are negative if $s\notin Y$.

\begin{figure}[h]
\begin{center}
\includegraphics{ex1.1}
\end{center}
\caption{}
\label{pic1}
\end{figure}

Let $p$ be the fixed point of the $T\times(\C^*)^r$-action corresponding to an $r$-tuple $D$. Let $R(T\times(\C^*)^r)=\Z[t_1,t_2,e_1,e_2,\ldots,e_r]$ be the representation ring of $T\times(\C^*)^r$. Then the weight decomposition of $T_p(\M_{r,n})$ is given by~(see e.g. \cite{Nakajima2})
\begin{gather}\label{weight decomposition}
T_p(\M_{r,n})=\sum_{i,j=1}^r e_j e_i^{-1}\left(\sum_{s\in D_i}t_1^{-l_{D_j}(s)}t_2^{a_{D_i}(s)+1}+\sum_{s\in D_j}t_1^{l_{D_i}(s)+1}t_2^{-a_{D_j}(s)}\right).
\end{gather}

Let $p$ be an arbitrary fixed point of the $T\times(\C^*)^r$-action on $\M_{r,n}^T(\pi)$. Let $D$ be the corresponding $r$-tuple of Young diagrams. From \eqref{weight decomposition} it follows that 
\begin{multline*}
d_{1,\alpha}^{+}(\pi)=\sum_{i,j=1}^r(|\{s\in D_i|\alpha(a_{D_i}(s)+1)-l_{D_j}(s)>0\}|+\\
+|\{s\in D_j|-\alpha a_{D_j}(s)+l_{D_i}(s)+1>0\}|).
\end{multline*}
Since $\alpha$ is big, the right-hand side is equal to
$$
rn+\sum_{i,j=1}^r|\{s\in D_j|a_{D_j}(s)=0,s\in D_i\}|=rn+\sum_{i,j\ge 0}\pi_{i,j}(\pi_{i,j}-\pi_{i,j+1}).
$$ 
Thus, we have proved \eqref{dimplus}.

It is well known that 
\begin{gather}\label{generating series}
\sum_{n\ge 0}[\M_{r,n}]t^n=\prod_{m=1}^r\prod_{n\ge 1}\frac{1}{1-\bL^{rn+m}t^n},
\end{gather}
see e.g. \cite{Nakajima3}. If we combine Theorem \ref{theorem:irreducible component} and the equations \eqref{identity}, \eqref{dimplus} and \eqref{generating series}, we obtain the proof of Theorem \ref{theorem:combinatorial identity}.

\end{document}